\tikzstyle{decision} = [diamond, draw, fill=blue!20,
\tikzstyle{decision1} = [diamond, draw, fill=blue!20,
\tikzstyle{block} = [rectangle, draw, fill=blue!20,
\tikzstyle{blockc1} = [rectangle, draw, fill=blue!20,
\tikzstyle{blockc2} = [rectangle, draw, fill=blue!20,
\tikzstyle{blockc3} = [rectangle, draw, fill=blue!20,
\tikzstyle{blockl2} = [rectangle, draw, fill=blue!20,
\tikzstyle{blockl4} = [rectangle, draw, fill=blue!20,
\tikzstyle{blockr1} = [rectangle, draw, fill=red!20,
\tikzstyle{line} = [draw, very thick, color=black!50, -latex']
\tikzstyle{cloud} = [draw, ellipse,fill=red!20, node distance=2.5cm, minimum height=2em]
\newtheorem{proposition}{Proposition}
\newcommand\copyrighttext{%
  \footnotesize \textcopyright 2022 IEEE. Personal use of this material is permitted. Permission from IEEE must be obtained for all other uses, in any current or future media, including reprinting/republishing this material for advertising or promotional purposes, creating new collective works, for resale or redistribution to servers or lists, or reuse of any copyrighted component of this work in other works.}
\newcommand\copyrightnotice{%
\begin{tikzpicture}[remember picture,overlay]
\node[anchor=south,yshift=10pt] at (current page.south) {\fbox{\parbox{\dimexpr\textwidth-\fboxsep-\fboxrule\relax}{\copyrighttext}}};
\end{tikzpicture}%
}
\begin{document}
\title{Solving Dynamic Optimization Problems to a Specified Accuracy: An Alternating Approach using Integrated Residuals}

\author{Yuanbo Nie and Eric C. Kerrigan 
\thanks{Yuanbo Nie and Eric C. Kerrigan are with the Department of Aeronautics, Imperial College London, SW7~2AZ, U.K. {\tt\small yn15@ic.ac.uk}, {\tt\small 
e.kerrigan@imperial.ac.uk}}%
\thanks{Eric C. Kerrigan is also with the Department of Electrical \& Electronic Engineering, Imperial College London, London SW7~2AZ, U.K.}%
\thanks{Accepted version to be published in: IEEE Transactions on Automatic Control}%
}

\maketitle
\copyrightnotice

\begin{abstract}
We propose a novel direct transcription and solution method for solving nonlinear, continuous-time dynamic optimization problems. Instead of forcing the dynamic constraints to be satisfied only at a selected number of points as in direct collocation, the new approach alternates between minimizing and constraining the squared norm of the dynamic constraint residuals integrated along the whole solution trajectories. As a result, the method can 1) obtain solutions of higher accuracy for the same mesh compared to direct collocation methods, 2)  enables a flexible trade-off between solution accuracy and optimality, 3) provides reliable solutions for challenging problems, including those with singular arcs and high-index differential algebraic equations. 
\end{abstract}

\begin{IEEEkeywords}
dynamic optimization, optimal control, estimation, system identification, nonlinear model predictive control
\end{IEEEkeywords}
\section{Introduction}
Model predictive control (MPC) is a modern control design method that requires the formulation and solution of a sequence of constrained dynamic optimization problems (DOPs), which arise from having to solve certain system identification, estimation and optimal control problems. For nonlinear variants of MPC (NMPC), numerically solving the DOPs often requires the use of a discretization method to transcribe the DOP into nonlinear programming problems (NLPs). To date, the common practice for NMPC implementations is to make use of existing direct transcription methods, such as direct collocation~\cite{Rawlings2020MPC}. 

Collocation belongs to a broader family of weighted residual methods, which are commonly found in the literature on the numerical solution of ordinary differential equations (ODEs) or differential algebraic equations (DAEs)~\cite{ascher1978}. Other variants of such methods include Galerkin and least-squares methods, with the key difference being the choice of the weighting functions. This paper extends the least-squares methods to the solution of DOPs, which will allow one to obtain a number of benefits over direct collocation. Most importantly, instead of forcing the residual errors to be zero only at collocation points, we propose to minimize the integral of the residual error over the whole trajectory, allowing solutions of a much higher accuracy to be achieved for the same discretization mesh, compared to a collocation method. This new approach  also has advantages when dealing with particular types of DOPs that are challenging for direct collocation, for example those containing singular controls and high-index DAEs. 

We would like to emphasize that the method presented here is still in its early development phase. The focus of this paper is to introduce the concept of integrated residual schemes, relate them to direct collocation and to propose a new formulation, namely \emph{direct alternating integrated residual} (DAIR), as a standalone transcription method. By formally treating the process of numerically solving DOPs on a given mesh as a multi-objective optimization problem, where we trade off accuracy and optimality, the proposed alternating approach can solve a DOP  to the specified accuracy. 

Sections~\ref{sec:optimizationBasedControl}--\ref{sec:DiscretizedElementDOP} provide a brief introduction to continuous-time DOPs, their discretization and the error metrics. Following this, the conventional direct collocation method is introduced in Section~\ref{sec:DirectTranscriptionMethod} focusing on the necessity of mesh refinement. The concept of integrated residual method (IRM) is introduced in Section~\ref{sec: ResidualMinimization} with new insights on its connection to the direct collocation approach. Subsequently, in Section~\ref{sec: ProposedScheme}, the motivations for the development of the DAIR scheme is given, together with discussions on its formulation and implementation strategies. This is followed by a number of classical examples in Section~\ref{sec:ExampleProblem}, where different aspects of the method are demonstrated. In Section \ref{sec:Conclusion} we provide concluding remarks and some directions for further development. 

\section{Dynamic optimization Problem}
\label{sec:optimizationBasedControl}
A large class of optimal control, estimation and system identification problems require the solution of  dynamic optimization problems with the objective functional expressed in the general Bolza form:
\begin{subequations}
\label{eqn:DOPBolza}
\begin{equation}
\label{eqn:DOPBolzaObjective}
\min_{x,u,p,t_0,t_f} \Phi(x(t_0),t_0,x(t_f),t_f,p)
+\int_{t_0}^{t_f} L(x(t),u(t),t,p) dt
\end{equation}
subject to
\begin{align}
\dot{x}(t)=f(x(t),u(t),t,p),\ &\forall t \in [t_0,t_f]\ \text{a.e.} \label{eqn:DOPBolzaDynamics}\\
g(x(t),\dot{x}(t),u(t),t,p) = 0,\ &\forall t \in [t_0,t_f]\ \text{a.e.} \label{eqn:DOPBolzaDAEDynamics}\\
c(x(t),\dot{x}(t),u(t),t,p)\le 0,\ &\forall t \in [t_0,t_f]\ \text{a.e.} \label{eqn:DOPBolzaPathConstraint}\\
\phi(x(t_0),t_0,x(t_f),t_f,p) =0,\ &
\end{align}
\end{subequations}
 where $x: \mathbb{R} \rightarrow \mathbb{R}^n$ is the continuous \emph{state trajectory} of the system, $u: \mathbb{R} \rightarrow \mathbb{R}^m$ is the \emph{input trajectory},   $p \in \mathbb{R}^{n_p}$ are \emph{static parameters}, $t_0 \in \mathbb{R}$ and $t_f \in \mathbb{R}$ are the initial and final time.  $\Phi$ is the \emph{Mayer cost} functional ($\Phi$: $\mathbb{R}^n \times \mathbb{R} \times \mathbb{R}^n \times \mathbb{R} \times \mathbb{R}^{n_p} \to \mathbb{R}$), $L$ is the \emph{Lagrange cost} functional ($L:\mathbb{R}^n \times \mathbb{R}^m \times \mathbb{R} \times \mathbb{R}^{n_p} \to \mathbb{R}$), $f$ defines the equality constraint related to ODEs of the system ($f:\mathbb{R}^n \times \mathbb{R}^m \times \mathbb{R} \times \mathbb{R}^{n_p} \to \mathbb{R}^n$), $g$ defines the equality constraint related to the DAEs of the system ($g:\mathbb{R}^n \times \mathbb{R}^n \times \mathbb{R}^m \times \mathbb{R} \times \mathbb{R}^{n_p} \to \mathbb{R}^{n_g}$), $c$ defines the inequality \emph{path constraint} ($c:\mathbb{R}^n \times \mathbb{R}^n \times \mathbb{R}^m \times \mathbb{R} \times \mathbb{R}^{n_p} \to \mathbb{R}^{n_c}$), and $\phi$ defines the \emph{boundary condition} ($\phi:\mathbb{R}^n \times \mathbb{R} \times \mathbb{R}^n \times \mathbb{R} \times \mathbb{R}^{n_p} \to \mathbb{R}^{n_q}$). The objective functional~\eqref{eqn:DOPBolzaObjective} is often represented by a single functional~$J$ with optimal solution denoted  $J^{\ast}$, and \eqref{eqn:DOPBolzaDynamics}--\eqref{eqn:DOPBolzaDAEDynamics} are referred together as the \emph{dynamic equations} or \emph{dynamic constraints}. 
 
 \section{Discretized Dynamic Optimization Problem}
\label{sec:DiscretizedElementDOP}
Numerical discretization schemes are often used to solve dynamic optimization problems. They can be categorized into fixed-degree $h$~methods such as Euler, Trapezoidal, Hermite-Simpson~(HS) and the Runge-Kutta (RK) family \cite{betts2010practical}, and variable higher-degree $p$/$hp$~methods \cite{fahroo2008advances,patterson2014gpops}. With \emph{direct} methods, the DOP is first discretized through a transcription process, after which the resulting nonlinear programming (NLP) problem is solved numerically. Due to their simplicity in implementation, direct methods have become the de facto standard for solving practical DOPs~\cite{limebeer2015faster}, hence will be considered in this paper.

\subsection{Temporal discretization and trajectory parameterization}

First, we subdivide the domain [$t_0$, $t_f$] into $K$ intervals $\mathbb{T}_k:=[s_k,s_{k+1}]$ for  $k\in\mathbb{I}_K:=\{1,\dots,K\}$, with $s_k$ the \emph{major node} locations and $t_0=s_1 <\dots< s_{K+1}=t_f$. Inside each interval $k$, we may define additional \emph{minor nodes} depending on the requirements of different computation schemes.

The key concept employed in the transcription process is the approximation of state, state derivatives and input trajectories with parameterized continuous or piecewise-continuous functions; these will be denoted by $\tilde{x}$, $\dot{\tilde{x}}$ and $\tilde{u}$, respectively. Inside each interval $k$, the trajectory for a state variable can be approximated as
\begin{equation}
\label{eqn: LGRStateApproximation}
x^{(k)}(t) \approx \tilde{x}^{(k)}(t) := \sum_{i=1}^{N^{(k)}}a_i^{(k)}\beta_{i}^{(k)}(t),
\end{equation}
with $\beta_{i}^{(k)}(\cdot)$ a basis function and $a_i^{(k)}$ the corresponding coefficient, also known as the \emph{amplitude of the basis function} for the $i^\text{th}$ degree of freedom. If the basis functions are defined on a different domain, appropriate mapping to the time mesh must be made.

When $N^{(k)}$ Lagrange interpolating polynomials are used as basis functions, which will be the case in the remainder of this paper, the coefficients will correspond to $N^{(k)}$ points on the polynomial function $\tilde{x}^{(k)}$. In this case, these unknown coefficients are called \emph{parameterized states} and denoted by $\chi_i^{(k)}$ to distinguish this from the more general case above. We also choose to design the minor nodes corresponding to these locations on the polynomial, in which case they are called \emph{data points} $d_i^{(k)}$, i.e.\ 
$\chi_i^{(k)}=\tilde{x}^{(k)}\left(d_i^{(k)}\right) \in \mathbb{R}^{n}$ for all $i\in\mathbb{I}_{N^{(k)}}$.  Additionally, we define $\chi^{(k)}:=[\chi_1^{(k)},\dots, \chi_{N^{(K)}}^{(k)}]^{\top} \in \mathbb{R}^{N^{(k)}\times n}$ and $\chi: = [\chi^{(1)},\dots, \chi^{(K)}]^{\top} \in \mathbb{R}^{N\times n}$, with $N:=\sum_{k=1}^K N^{(k)}$, for brevity in later discussions. The parameterization of the input using the data point values $\upsilon_i^{(k)}$ can be done similarly for the approximation function $\tilde{u}^{(k)}$.

Continuity of the trajectories between interval $k$ and $k+1$ can be enforced either
\begin{itemize}
    \item implicitly by using the same decision variable for the last node of interval $k$ and the first node of interval $k+1$, reducing the number of unknowns points from $N^{(k)}$ to $N^{(k)}-1$, or
    \item explicitly by additional continuity constraints:
    \begin{equation}
    \label{eqn:continuityConstraints}
        \tilde{x}^{(k)}\left(t_{k+1}\right) = \tilde{x}^{(k+1)}\left(t_{k+1}\right).
    \end{equation}
\end{itemize}

For the inputs, they can be either implemented as continuous trajectories as for the states, or allowing them to be discontinuous at 
major nodes. In this work we will use a general formulation that considers both cases. W.l.o.g.\  we consider the grid to be given in the sense that the distribution of mesh and data points inside the domain is determined a priori; however, the time corresponding to these locations can vary, since $t_0$ and $t_f$ can be free. Hence, the solution to the discretized problem, i.e.\ the full set of optimization variables, is denoted by $\mathcal{Z} \coloneqq (\chi, \upsilon, p, t_0, t_f)$.

It is also possible to define a quadrature of different order for each interval, in order to numerically integrate a functional, e.g.\ $L$. The \emph{quadrature points} are defined as a different choice of minor nodes inside each interval $k$ according to the quadrature scheme, i.e.\ $q_i^{(k)}$, for all $i\in\mathbb{I}_{Q^{(k)}}$, where $Q^{(k)}$ is the number of quadrature points inside  interval $k$. 

With such a discretization in time and parameterization of state and input variables, we express the direct discretization of DOP~\eqref{eqn:DOPBolza} as the following NLP:
\begin{subequations}
\label{eqn:DDOPProblem}
\begin{multline}
\label{eqn:DDOPObjective}
\min_{\chi,\upsilon,p,t_0,t_f}  \Phi\left(\chi_1^{(1)},t_0,\chi_{N^{(K)}}^{(K)},t_f,p\right)\\
+\sum_{k=1}^{K}\sum_{i=1}^{Q^{(k)}} w_i^{(k)} L\left(\tilde{x}^{(k)}\left(q_i^{(k)}\right),\tilde{u}^{(k)}\left(q_i^{(k)}\right),t_0,t_f,p\right)
\end{multline}
subject to, for all $k\in\mathbb{I}_K$, 
\begin{align}
\label{eqn:DDOPEQNConstraint}
\psi^{(k)} \left(\chi^{(k)},\upsilon^{(k)},t_0,t_f,p\right) = & 0,\\
\label{eqn:DDOPIEQNConstraint}
\gamma^{(k)} \left(\chi^{(k)},\upsilon^{(k)},t_0,t_f,p\right) \le & 0,\\
\phi\left(\chi_1^{(1)},t_0,\chi_{N^{(K)}}^{(K)},t_f,p\right) =& 0.
\end{align}
\end{subequations}
as well as any necessary continuity constraints in the form of~\eqref{eqn:continuityConstraints}. The expressions for the functions $\psi^{(k)}$ and $\gamma^{(k)}$ depend on the details of the transcription method. The scalars $w_i^{(k)}, \forall i\in\mathbb{I}_{Q^{(k)}}$ are  the interval-dependent quadrature weights (i.e.\ including the corresponding time interval $\Delta t^{(k)}:=s_{k+1}-s_k$ contributions)  for the numerical integration of the Lagrange cost inside the interval.

The discretized problem can be solved with off-the-shelf NLP solvers, with discretized solution $\mathcal{Z} \coloneqq \left(\chi, \upsilon, p, t_0, t_f\right)$. In accordance to the parameterization and discretization methods employed, trajectories of the solution $\tilde{z}(t) \coloneqq \left(\tilde{x}(t), \tilde{u}(t), t, p\right)$ can be obtained. For a multiple-interval mesh, approximated state and input trajectories, $\tilde{x}$ and $\tilde{u}$, can be a piecewise polynomial based on interpolation polynomial functions $\tilde{x}^{(k)}$ and $\tilde{u}^{(k)}$ combining all intervals $k\in\mathbb{I}_K$. 

\subsection{Error metrics}
\label{sec:errormetrics}

In practice, solutions of~\eqref{eqn:DOPBolza} can rarely be represented exactly by the approximating function. For example, with a polynomial basis, even in the simple case where $f(x(t),u(t))=\dot{x}(t)=ax(t)+u(t)$ and $u(t)=1$ are both polynomials, the corresponding state trajectory $x(t)=x(0)e^{at}+\int_0^t e^{a(t-\varsigma)}u(\varsigma)\ d\varsigma$ is clearly not a polynomial for all $a\neq0$ and approximation errors should be expected. 

 To measure the accuracy of the solution, one would ideally like to compare $\tilde{x}$ with $x$, and $\dot{\tilde{x}}$ with $\dot{x}$; however such exact solutions are not obtainable for the majority of  practical problems. More importantly, many DOPs do not have a  unique optimal solution, making direct comparison of trajectories unsuitable. As a consequence, appropriate error metrics are needed that can indicate the accuracy of a solution without knowing the solution itself. 

For the dynamic equations, it is often regarded as a good idea in practice to compute the residuals $\varepsilon(t) \in \mathbb{R}^{n+n_g}$ defined as
\begin{equation}
\label{eqn: DiscretizationError}
\varepsilon(t):=\begin{bmatrix}
\dot{\tilde{x}}(t)-f(\tilde{x}(t), \tilde{u}(t), t, p)\\
g(\tilde{x}(t), \dot{\tilde{x}}(t), \tilde{u}(t), t, p)
\end{bmatrix},
\end{equation}
which is straightforward to compute for any DOP solution. However, the residual evaluated at a particular location of the domain is typically not representative of the actual accuracy of the solution. Instead, the residuals integrated along certain intervals of the domain are often found to be a suitable metric. Following this idea, the most popular choice for error analysis is to integrate the norm of $\varepsilon(t)$ for each interval $\mathbb{T}_k$ to get 
\begin{equation}
\label{eqn: AbsLocalError2}
\eta^{(k)}:=\int_{\mathbb{T}_k} 	\|\varepsilon^{(k)}(t)\|_{2}\: dt,
\end{equation} 
with $\|\cdot\|_{2}$ the vector 2-norm. The integral can be practically estimated by high-order quadrature. The metric $\eta \in \mathbb{R}^N$ is typically referred to as the \emph{absolute local error}. When this error is normalized with the largest magnitudes of state and state derivatives, the error is known as the \emph{relative local error}~\cite{betts2010practical}. Other variants are also possible such as the \emph{mean local error}, with normalization by the interval size, and \emph{squared absolute local error}, using the square of the norm instead.

Analogously, the integration of residual errors can be computed along the whole trajectory. For instance, we can compute the \emph{integrated residual norm squared} (IRNS) error as
\begin{equation}
\label{eqn: ResidualMinimizationOrgr}
r(\tilde{x}, \tilde{u}, t_0, t_f, p):= \int_{t_0}^{t_f} \|\varepsilon(t)\|^2_{2}\: dt.
\end{equation}
Other variations in the definition are also possible, e.g.\ the \emph{mean integrated residual norm squared} (MIRNS) error defined as $\frac{r}{\Delta t}$ with $\Delta t:=t_f-t_0$. Additionally, an \emph{absolute local constraint violation} $\epsilon$ may be evaluated to measure possible inequality constraint violations at different points along the trajectories. Once the errors inside the domain are evaluated, appropriate modifications can be made to the discretization mesh. The problem can be solved iteratively until a solution that fulfills all predefined error tolerances is obtained. This process is commonly known as \emph{mesh refinement} \cite{betts2010practical}.  

\subsection{Enforcement of dynamic constraints in the NLP}
\label{subsec:ErrorinFunctionApproximation}

The inevitability of approximation errors leads to an important implication: it is not possible for \eqref{eqn:DOPBolzaDynamics}--\eqref{eqn:DOPBolzaDAEDynamics} to be satisfied everywhere along the domain  for any arbitrary choice of the minor nodes. 
To gain better insight on how these constraints should be dealt with, we refer to a broader class of numerical methods commonly used to solve differential equations. One way is to define an equivalent optimization problem, which minimizes a measure of the solution, based on the error criteria. This is known as the Rayleigh-Ritz approach~\cite[Sect.~5.2--5.7]{rao2010finite} and the optimization problem can either be solved directly or through the use of some optimality conditions. A related, but more generally applicable approach than the Rayleigh-Ritz method is the method of weighted residuals~\cite[Sect.~5.8]{rao2010finite}, requiring 
\begin{equation}
\label{eqn: WRMWeakForm}
    \int_{\mathbb{T}_k} \varpi^{(k)}(t)\varepsilon_{j}^{(k)}(t)\: dt =0, \text{ for } j=1,\hdots,n+n_g,
\end{equation}
for all weighting functions  $\varpi^{(k)}:\mathbb{R} \rightarrow \mathbb{R}$ taken from a suitably-defined set of functions. The use of such weighting functions essentially provides a way to test the value of the local residuals. Thus, $\varpi^{(k)}$ is also commonly referred to as a \emph{test function} or \emph{trial function} in the literature. 

When yielding finite-dimensional approximations, we choose a finite set of weighting functions as test functions. Different choices of test functions lead to different variants of weighted residual methods, such as Galerkin, collocation, least-squares and the method of moments. They each have their own properties for solution accuracy and computational complexity and the appropriate choices will be problem-dependent. Regardless of the choice, the resultant set of equations can be implemented as the equality constraints \eqref{eqn:DDOPEQNConstraint} so that the dynamic equations \eqref{eqn:DOPBolzaDynamics}--\eqref{eqn:DOPBolzaDAEDynamics} can be approximately satisfied.

\section{Direct collocation}
\label{sec:DirectTranscriptionMethod}
For the collocation weighted residual method, the test functions are selected to be Dirac delta functions, leading to $n+n_g$ equality constraints to be applied to each of the $N^{(k)}$ data points. The Dirac delta functions posses the \emph{isolation property}, namely that the integral of the function on an interval is zero,  except the intervals that 
contain the center of the function, where the integral equals to 1. Therefore information needed to evaluate a constraint equation at a data point will be fully independent from information corresponding to other data points, contributing to the computational efficiency of the direct collocation method. 

The other simplification commonly made in direct collocation is to also use the same data point definition 
for both the quadrature points in the numerical integration of the Lagrange cost and the points where path constraints \eqref{eqn:DOPBolzaPathConstraint} are forced to be satisfied. As a result, the major nodes and data points together would be sufficient for the transcription of the problem to an NLP, and the data points in this case are known as the \emph{collocation points}.

With the collocation weighted residual method, the resultant equality constraints from~\eqref{eqn: WRMWeakForm} with a finite-dimensional approximation is
\begin{subequations}
\label{eqn:DCConstraints}
\begin{align}
\label{eqn:DCODEDynamicsConstraint}
\sum_{l=1}^{N^{(k)}}\mathcal{A}_{il}^{(k)}\chi_l^{(k)}+\mathcal{D}_{il}^{(k)}f\left(\chi_l^{(k)},\upsilon_l^{(k)},t_0,t_f,p\right) = & 0, \\
\label{eqn:DCDAEDynamicsConstraint}
g\left(\chi_i^{(k)},\dot{\chi}_i^{(k)},\upsilon_i^{(k)},t_0,t_f,p\right) = & 0, \end{align}
with $\dot{\chi}_i^{(k)}:=\dot{\tilde{x}}^{(k)}\left(d_i^{(k)}\right) \in \mathbb{R}^{n}$. $\mathcal{A}^{(k)} \in \mathbb{R}^{N^{(k)}\times N^{(k)}}$ is a discretization-dependent constant matrix, where $\mathcal{A}_{il}^{(k)}$ is element $(i,l)$ of the matrix, and $\mathcal{D}^{(k)} \in \mathbb{R}^{N^{(k)}\times N^{(k)}}$ is a matrix containing time variables. In NLPs arising from direct collocation, \eqref{eqn:DDOPEQNConstraint} needs to contain \eqref{eqn:DCODEDynamicsConstraint}--\eqref{eqn:DCDAEDynamicsConstraint} for the dynamic equations to be approximately fulfilled. Also, \eqref{eqn:DDOPIEQNConstraint} is chosen such that the inequality constraints are
\begin{equation}
\label{eqn:DCPathConstraint}
c\left(\chi_i^{(k)},\dot{\chi}_i^{(k)},\upsilon_i^{(k)},t_0,t_f,p\right) \le 0.
\end{equation}
\end{subequations}

In essence, direct collocation forces the residuals $\varepsilon^{(k)}$ to be zero at all collocation points. It is well-known in the field of approximation theory that if a function cannot be represented exactly by a polynomial, forcing the approximating polynomial to exactly go through some sampled data points generally results in larger errors for the function values between the data points than other methods, such as least-squares fitting. Similarly, direct collocation will generally result in large errors between collocation points, regardless of the distribution and spacing of these points.

More importantly, since most direct collocation methods employ absolute or relative local error as the error metric, e.g.\ with \eqref{eqn: AbsLocalError2}, a mismatch arises between the error measures in the problem formulation and the error criteria for a solution to be sufficiently accurate:
\begin{itemize}
    \item when formulating the NLP, satisfaction of dynamic constraints are based on residuals \emph{at collocation points}, whereas
    \item during the error analysis of the solution, satisfaction of dynamic constraints are based on the norm of residuals \emph{integrated along intervals in-between collocation points}.
\end{itemize}
As a direct consequence and often not realized by non-experts, 
\begin{itemize}
    \item regardless of how small the solver tolerances are, solving the NLP once on a single given discretization mesh will provide no guarantee in terms of solution accuracy and constraint satisfaction, hence
    \item posterior procedures such as error analysis and mesh refinement must be considered as an indispensable part of a direct collocation method to ensure convergence and solution accuracy.
\end{itemize}
Integrated residual methods fundamentally address the problems arising from this error metric mismatch, by working with the residuals in integrated form in the NLP formulations.

\section{Integrated residual methods}
\label{sec: ResidualMinimization}

In the field of approximation theory, the least squares criterion is often considered as a more suitable choice than forcing the fitting error to be exactly zero only at some selected points~\cite{BirgeRaymondLeastSquares}. Before exploring the implementation of the least-squares approach for the solution of DOPs, we first look at the use of such a method in solving dynamic equations in the form of ODEs and DAEs. For the reminder of this work, we will use the MIRNS error as the error metric, however other variants of IRNS error may be selected also.  

  Following the Rayleigh-Ritz approach, we can equate finding an approximate solution of the dynamic equations to the following optimization problem that minimizes the MIRNS error:

\begin{equation}
\label{eqn: ResidualMinimizationOrg}
\min_{\chi,\upsilon,p,t_0,t_f}  \frac{1}{\Delta t}r(\tilde{x}, \tilde{u}, t_0, t_f, p)
\end{equation}
subject to any continuity and boundary constraints.

The least squares approach as defined in the class of weighted residual methods is equivalent to applying the optimality conditions and obtaining a number of equality constraints to be satisfied. However, we note that this condition is only necessary and thus theoretically can only guarantee that the trajectory is a stationary solution in general, i.e.\ the trajectory could be a local maximum for the MIRNS error. In addition, using only the optimality conditions will not be able to provide indications on the magnitudes of the errors. Hence, in this work we will focus on the development of methods that directly solve the optimization problem \eqref{eqn: ResidualMinimizationOrg} instead, with the added benefit that the evaluation of error magnitudes can be integrated into the solution process instead of a posteriori. 

The cost formulation~\eqref{eqn: ResidualMinimizationOrgr} effectively introduces relative trade-offs for the accuracy between the dynamic equations. Although the original expression works well when all variables are scaled to the same numerical range,  there exist situations where it may be beneficial to specify additional weighting terms with a diagonal matrix $\mathcal{W} \in \mathbb{R}^{(n+n_g)\times(n+n_g)}$ for the corresponding dynamic equations. In practice, we often know beforehand that the modeling of some relationship (e.g.\ between acceleration and velocity) will have a higher confidence level than the modeling of some other dynamics (e.g.\ relationship between gas peddle position and acceleration). In these cases, it is preferable to formally specify what would be the desired trade-off in terms of accuracy for different dynamic equations.

For certain simple problems, $r$ may be expressed analytically for precise computation. However, for the majority of practical problems, numerical integration with quadrature rules of sufficiently high order can be used, i.e. the objective in \eqref{eqn: ResidualMinimizationOrg} can be replace by
\begin{subequations}
\label{eqn:LS}
\begin{equation}
\label{eqn: ResidualMinimizationDiscrete}
\min_{\chi,\upsilon,p,t_0,t_f} \frac{1}{\Delta t}
\sum_{k=1}^{K} \mathcal{R}\left(\chi^{(k)},\upsilon^{(k)},t_0,t_f,p\right)
\end{equation}
with
\begin{equation}
\label{eqn: ResidualMinimizationDiscreteQuadrature}
\mathcal{R}\left(\chi^{(k)},\upsilon^{(k)},t_0,t_f,p\right):=\sum_{i=1}^{Q^{(k)}}w_{i}^{(k)}\begin{Vmatrix}\mathcal{W}\varepsilon(q_i^{(k)})\end{Vmatrix}^2_{2}.
\end{equation}
\end{subequations}

\subsection{Least squares method for solving the DOP}

When solving the DOP using the least squares approach, instead of just solving the differential equations, it is important to address the relationship between the requirement to minimize the integrated residual~\eqref{eqn: ResidualMinimizationDiscrete} and the desire to minimize the original objective~\eqref{eqn:DOPBolzaObjective}. With an indirect approach, the optimality conditions for the DOP can be formulated and subsequently solved using least-squares finite element methods~\cite{bochev2006least}. 

For direct transcription methods, recently proposed penalty-barrier finite element method (PBF)~\cite{9304216} formulates an augmented objective consisting of the original objective, the MIRNS error as a penalty term and inequality constraint violations as an integrated logarithmic barrier term. The authors were able to prove  convergence of their method provided that the functions that define the problem satisfies appropriate boundedness and Lipschitz conditions. 

Based on the same concept of minimizing the integrated residual, our earlier work~\cite{NieSolutionRep} presented a solution representation method that is able to obtain solutions of much higher accuracy than collocation methods, while maintaining non-increasing objective values. This approach effectively treats computing an approximate solution of the DOP as a multi-objective optimization problem. In this work, we extend this method as a stand-alone scheme for the solution of DOPs and the new method will be addressed in detail in Section~\ref{sec: ProposedScheme}. Here, we will first focus on demonstrating some of the characteristics of this class of methods, and the relationship to  direct collocation. 

\subsection{Relationship to direct collocation}
\label{subsec: relationshiptoDC}
In general, IRM is considered as a different approach to collocation. Here, we show a different perspective, namely that direct collocation can be considered as a special case of IRM.

\begin{proposition}
\label{prop: RelationshipIRMDC}
In the direct collocation formulation, the enforcement of dynamic constraints with~\eqref{eqn:DCODEDynamicsConstraint}--\eqref{eqn:DCDAEDynamicsConstraint} is equivalent to the solution of a special case of problem \eqref{eqn:LS}, with the quadrature points $q_i^{(k)}, \forall i\in\mathbb{I}_{Q^{(k)}}$ in each interval selected to be the same as the data points $d_i^{(k)}, \forall i\in\mathbb{I}_{N^{(k)}}$ of that interval. 
\end{proposition}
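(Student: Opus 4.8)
The plan is to show that, once the quadrature nodes are identified with the data points, the integrated-residual objective in \eqref{eqn:LS} collapses to a weighted sum of squared pointwise residuals, and that the set on which this sum attains its global minimum of zero is exactly the feasible set cut out by the collocation constraints \eqref{eqn:DCODEDynamicsConstraint}--\eqref{eqn:DCDAEDynamicsConstraint}. Thus the equivalence is established by matching the argmin set of the special-case problem to the collocation feasible set, rather than by manipulating optimality conditions.

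First I would substitute $q_i^{(k)}=d_i^{(k)}$ and $Q^{(k)}=N^{(k)}$ into \eqref{eqn: ResidualMinimizationDiscreteQuadrature}, so that
\[
\mathcal{R}\!\left(\chi^{(k)},\upsilon^{(k)},t_0,t_f,p\right)=\sum_{i=1}^{N^{(k)}} w_i^{(k)}\,\bigl\|\mathcal{W}\varepsilon(d_i^{(k)})\bigr\|_2^2 .
\]
Each summand is non-negative, so the objective of \eqref{eqn:LS} is bounded below by zero and therefore has global infimum zero. Invoking the standing hypotheses that Gaussian-type quadrature weights are strictly positive, $w_i^{(k)}>0$, and that the weighting matrix $\mathcal{W}$ is a nonsingular (positive-definite diagonal) matrix, this infimum is attained precisely when every summand vanishes, i.e. the argmin set of the special-case problem equals $\{\mathcal{Z} : \varepsilon(d_i^{(k)})=0,\ \forall i\in\mathbb{I}_{N^{(k)}},\ \forall k\in\mathbb{I}_K\}$, provided the collocation system is consistent so that the minimum value is genuinely zero.

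Next I would identify this zero-residual set with the collocation feasible set by splitting $\varepsilon$ according to its definition \eqref{eqn: DiscretizationError}. The last $n_g$ components (the DAE block) evaluated at $d_i^{(k)}$ are $g(\chi_i^{(k)},\dot{\chi}_i^{(k)},\upsilon_i^{(k)},t_0,t_f,p)$, which is literally \eqref{eqn:DCDAEDynamicsConstraint}. For the first $n$ components (the ODE block) I would use that $\tilde{x}^{(k)}$ is the Lagrange interpolant through the data points, so $\tilde{x}^{(k)}(d_i^{(k)})=\chi_i^{(k)}$ and $\dot{\tilde{x}}^{(k)}(d_i^{(k)})=\dot{\chi}_i^{(k)}=\sum_{l} \mathcal{A}_{il}^{(k)}\chi_l^{(k)}$ via the constant reference differentiation matrix $\mathcal{A}^{(k)}$, while the invertible diagonal time-scaling matrix $\mathcal{D}^{(k)}$ selects $f$ at node $i$. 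Consequently the left-hand side of \eqref{eqn:DCODEDynamicsConstraint} is a nonzero scalar (invertible linear) rescaling of the ODE-block residual at $d_i^{(k)}$, and hence vanishes if and only if that residual vanishes. Combining the two blocks gives $\{\varepsilon(d_i^{(k)})=0\}=\{\text{\eqref{eqn:DCODEDynamicsConstraint}--\eqref{eqn:DCDAEDynamicsConstraint} hold}\}$, so the collocation constraints define exactly the global minimizers of the special-case integrated-residual problem, completing the equivalence.

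I expect the main obstacle to be the bookkeeping in the ODE block of the third step: reconciling the general matrix form $\sum_{l}\mathcal{A}_{il}^{(k)}\chi_l^{(k)}+\mathcal{D}_{il}^{(k)}f(\cdot)$ with the pointwise residual requires being explicit that $\mathcal{A}^{(k)}$ acts as the differentiation operator on the $\chi_l^{(k)}$ and that $\mathcal{D}^{(k)}$ is invertible, so that forcing the linear combination to zero is equivalent to forcing $\varepsilon(d_i^{(k)})=0$. The other point requiring care is stating the hypotheses ($w_i^{(k)}>0$, $\mathcal{W}$ nonsingular, and consistency of the collocation system) under which the elementary fact that a weighted sum of squares is zero iff each term is zero upgrades least-squares minimization to exact constraint satisfaction. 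Apart from these structural points, the remaining steps are routine.
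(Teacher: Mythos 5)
Your argument follows the same route as the paper's own proof: identify the quadrature nodes with the data points so that the integrated-residual objective becomes a nonnegative weighted sum of squared pointwise residuals, whose global minimum of zero is attained exactly on the set where the collocation constraints hold. You are simply more explicit about the hypotheses the paper leaves implicit (strictly positive weights $w_i^{(k)}$, nonsingular $\mathcal{W}$, the invertible link between \eqref{eqn:DCODEDynamicsConstraint} and the pointwise ODE residual, and consistency of the collocation system so that the minimum value is genuinely zero), so the proposal is correct and essentially coincides with the published proof.
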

\begin{proof}
For equality constraints~\eqref{eqn:DCODEDynamicsConstraint}--\eqref{eqn:DCDAEDynamicsConstraint} to be fulfilled, the residuals $\varepsilon(t)$ evaluated at collocation points $d_i^{(k)}$ for all $i\in\mathbb{I}_{N^{(k)}}$ and $k\in\mathbb{I}_{K}$ will all be zero. When the quadrature points chosen for the IRM problem~\eqref{eqn:LS} match the data points,  the IRM problem will have the optimal solution with $\mathcal{R}^{\ast}=0$,  since the corresponding residuals $\varepsilon(t)$ at these data points can be forced to zero altogether. This is equivalent to enforcing \eqref{eqn:DCODEDynamicsConstraint}--\eqref{eqn:DCDAEDynamicsConstraint}.
\end{proof}

Hence, the enforcement of dynamic constraints in direct collocation can be interpreted as an IRM where in~\eqref{eqn:LS} the quadrature points are chosen to be the same as the data points. The quadrature order with this choice is not sufficiently high, in general. Hence, large errors may occur between the collocation/polynomial data points, which will not be reflected in any convergence and error measure of the underlying NLP. Hence, successfully solving the direct collocation NLP to very small tolerances does not guarantee an accurate solution. 

\subsection{Improved DAE handling}
For direct collocation, if DAE equations exist as part of the dynamics, in addition to fulfilling all other constraints, there may not always be sufficient remaining degrees of freedom to additionally satisfy equation~\eqref{eqn:DCDAEDynamicsConstraint} for all $i\in\mathbb{I}_{N^{(k)}}$, causing convergence issues for the NLP solver. The opposite could happen as well, with degree of freedoms not uniquely defined by the constraints, leading to multiple or even an infinite number of solutions. In this case, significant fluctuations will occur in the obtained solution. If the original continuous-time DOP is consistent, inconsistencies as described above would be attributed to the constraint discretization process, leading to either an \emph{over-constrained} or \emph{under-constrained} NLP. 

Without special considerations as discussed in \cite{logsdon1989accurate}, direct collocation methods are known to struggle for high-index DAE systems \cite{betts2010practical}, and systems with constraints that force the solution to lie on a manifold. For instance, in three-dimensional mechanical systems with quaternions: in addition to implicitly determined forces, a quaternion equation constrains the solutions to lie on a unit sphere. With these types of problems, convergence of the NLP solver may be significantly deteriorated if a good initial guess is not provided. By allowing arbitrarily small residuals for the constraints to exist during the solution process similar to penalty methods \cite{fiacco1990nonlinear}, IRMs have been demonstrated to have better convergence properties in both cases of high-index DAEs and DAEs that force the solution to lie on a manifold.

\subsection{Suppression of singular arc fluctuations}
\label{subsec:SuppressionSARC}

As explained in Section~\ref{sec:errormetrics}, in the parameterisation of DOPs, the representation of the state and input trajectories can rarely be made exact, hence approximation errors are generally unavoidable. This provides a unique opportunity for IRM-type transcription methods to automatically suppress potential singular arc fluctuations, without the need for additional treatments.

Due to the existence of approximation errors, and the multi-objective nature of IRM for solution of DOPs, different solution candidates on the singular arc with negligible differences from the objective point of view can now be ranked by the error. Larger fluctuations in the solution generally lead to bigger errors along the trajectory, therefore a solution with the smallest fluctuations is often the most accurate solution in the IRM residual error metrics. This is the key reason behind the suppression of singular arc fluctuation with IRM-type transcriptions.

\section{Direct Alternating Integrated Residual (DAIR) Method}
\label{sec: ProposedScheme}

Though  PBF  has a number of advantages over direct collocation in terms of solution accuracy and robust handling of some difficult problems, PBF is still a method developed focusing on off-line solution of dynamic optimization problems. Illustrated in Figure~\ref{fig:PF_TrajOpt}, these types of problems often have one target solution that the DOP algorithm is searching for, namely the solution with the smallest objective value among the ones that contain the lowest possible error. In other words, one wants the convergence of the objective ($J \to J^{\ast}$) and constraint satisfaction ($r,\eta,\epsilon \to 0$) at the same time, as the discretization mesh becomes denser ($K$ increases). 

\begin{figure}[t]
    \centering
    \begin{subfigure}[b]{\columnwidth}
        \centering
        \includegraphics[width=\textwidth]{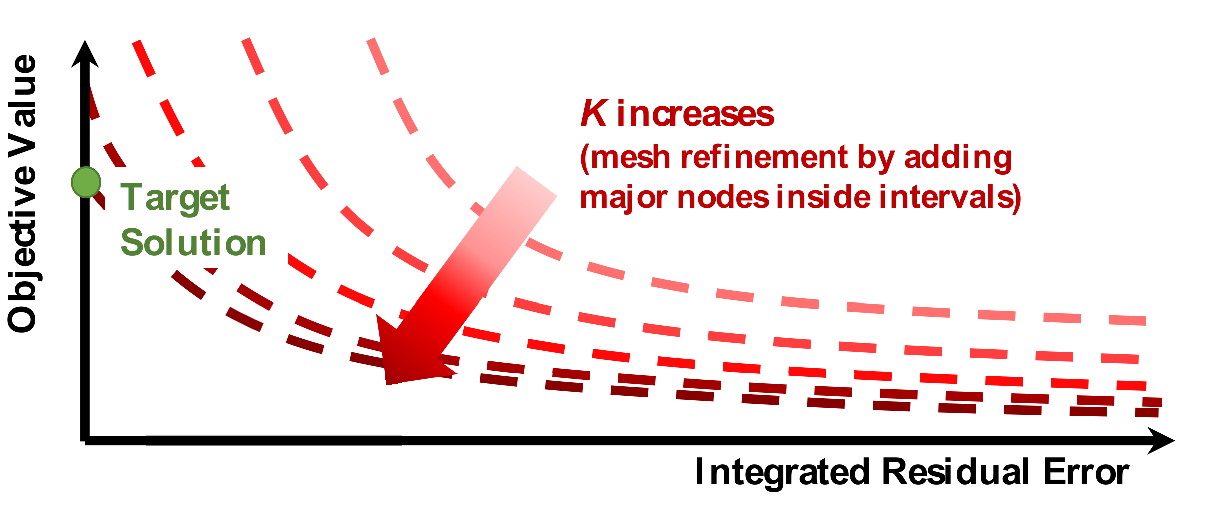}
    	\caption{Dynamic optimization}
    	\label{fig:PF_TrajOpt}
    \end{subfigure}
	\begin{subfigure}[b]{\columnwidth}
        \centering
        \includegraphics[width=\textwidth]{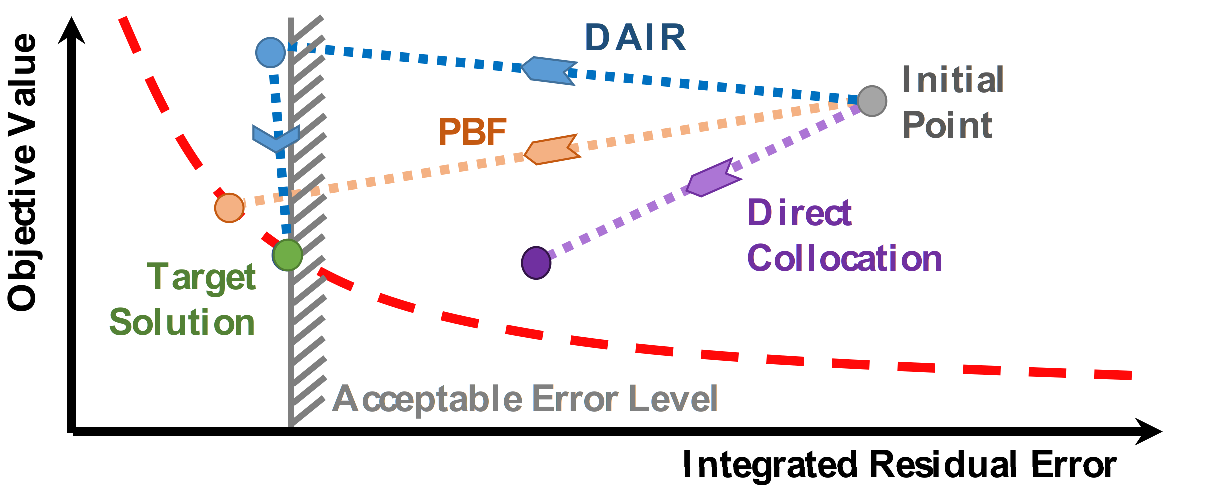}
    	\caption{Single solve on a given mesh}
    	\label{fig:PF_FixedMesh}
    \end{subfigure}
    \caption{Illustration for the differences in solving dynamic optimization problems offline and solving DOPs on a single given mesh.}
\label{fig:ParetoFrontComp}
\end{figure}

The picture is different when considering a single solve of the DOP on a given discretization mesh, especially considering NMPC applications. Firstly, the nature of solving the DOP numerically will become a multi-objective problem, leading to an inevitable trade-off between minimizing the objective and reducing the residual error. This often indicates that, in practice, the target solutions will not be the ones that lie on the far ends of the Pareto front.

Evaluating preferences among various solutions on the Pareto front depends on other criteria, e.g.\ the closed-loop performance of the NMPC controller. We refer to other work, e.g.\ \cite{BulatParetoFront}, for details on how such a trade-off can be made. Here we directly take the outcome of this decision-making process: an error level under which the solution accuracy can be considered acceptable. The original multi-objective optimization problem can then be translated into a single objective one, with the target solution being the one that minimizes the objective value, while satisfying the constraints concerning the acceptable error level. 

In Figure \ref{fig:PF_FixedMesh}, we illustrate the solution process of direct collocation and the PBF method for a given mesh size and discretization method. As long as the initial point and mesh design do not change, the solution that a direct collocation method can obtain will not change. From earlier discussions, it can be seen that, regardless of whether this solution satisfies the acceptable error level, it is very unlikely in practice to be a solution that resides on the Pareto front. In other words, one aspect of a direct collocation solution can be improved without deteriorating the other aspect.

In contrast, PBF is capable of finding solutions on the Pareto front; however, controlling which solution it will  terminate at would require careful selection of parameters for the penalty and barrier terms. Therefore, even if the sub-iterations can be computed efficiently, additional challenges are associated with the PBF method to converge easily to the target solution, given a specified acceptable accuracy level. The proposed DAIR method aims to address these challenges and provide a reliable and efficient approach.

\subsection{Elementary formulations}
The elementary formulations of the DAIR method consist of two problems: minimizing the MIRNS error  and minimizing the objective subject to integrated residual error constraints, denoted as the \emph{DAIR residual minimization problem} and \emph{DAIR cost minimization problem}, respectively. The method retains the same decision variables as in \eqref{eqn:DDOPProblem}, namely $\mathcal{Z} \coloneqq (\chi, \upsilon, p, t_0, t_f)$, and uses the interpolation polynomial formula $\tilde{x}(\cdot)$, $\dot{\tilde{x}}(\cdot)$ and $\tilde{u}(\cdot)$  for the computation and integration of various elements of the discretized DOP. The interpolation formulation is provided in more detail in our previous work on solution representation methods~\cite{NieSolutionRep}.

The \emph{DAIR residual minimization problem} has the  formulation
\begin{subequations}
\label{eqn: AlternatingResMin}
\begin{equation}
\label{eqn: AlternatingResMinCost}
\min_{\chi,\upsilon,p,t_0,t_f}
\frac{1}{\Delta t}
\sum_{k=1}^{K} \mathcal{R}\left(\chi^{(k)},\upsilon^{(k)},t_0,t_f,p\right)
\end{equation}
subject to, for all $i\in\mathbb{I}_{N^{(k)}}$ and $k\in\mathbb{I}_{K}$,
\begin{align}
\label{eqn: AlternatingResMinPathConstraint}
c\left(\chi_i^{(k)},\dot{\chi}_i^{(k)},\upsilon_i^{(k)},t_0,t_f,p\right) \le & 0,  \\
\label{eqn: AlternatingResMinBoundaryConstraint}
\phi\left(\chi_1^{(1)},t_0,\chi_{N^{(K)}}^{(K)},t_f,p\right) =& 0,
\end{align}
and optionally any continuity constraints in the form of~\eqref{eqn:continuityConstraints}, as well as optionally one or more constraints  from the following constraints regarding upper limits for the objective $J_c \in \mathbb{R}$:
\begin{multline}
\sum_{k=1}^{K}\sum_{i=1}^{Q^{(k)}} w_i^{(k)} L\left(\tilde{x}^{(k)}\left(q_i^{(k)}\right),\tilde{u}^{(k)}\left(q_i^{(k)}\right),t_0,t_f,p\right) \\
+\Phi\left(\chi_1^{(1)},t_0,\chi_{N^{(K)}}^{(K)},t_f,p\right) \le J_c,
\end{multline}
and the mean integrated residual squared (MIRS) error for individual dynamic equations $\varrho \in \mathbb{R}^{(n+n_g)}_{\geq 0}$:
\begin{equation}
\label{eqn: AlternatingResResConstraint}
\sum_{k=1}^{K}\sum_{i=1}^{Q^{(k)}}\frac{w_{i}^{(k)}}{\Delta t}(\varepsilon_{j}^{(k)}(q_i^{(k)}))^2\le \varrho_{j}, \text{ for } j=1,\hdots,n+n_g,
\end{equation}
\end{subequations}
with $\varrho_{j}$ the $j^\text{th}$ element in $\varrho$.

The counterpart, the \emph{DAIR cost minimization problem}, is
\begin{multline}
\label{eqn: AlternatingCostMin}
\min_{\chi,\upsilon,p,t_0,t_f}  \Phi(\chi_1^{(1)},t_0,\chi_{N^{(K)}}^{(K)},t_f,p)+\\
\sum_{k=1}^{K}\sum_{i=1}^{Q^{(k)}} w_i^{(k)} L\left(\tilde{x}^{(k)}\left(q_i^{(k)}\right),\tilde{u}^{(k)}\left(q_i^{(k)}\right),t_0,t_f,p\right)
\end{multline}
subject to~\eqref{eqn: AlternatingResMinPathConstraint}, \eqref{eqn: AlternatingResMinBoundaryConstraint}, \eqref{eqn: AlternatingResResConstraint} and~\eqref{eqn:continuityConstraints}, for $i\in\mathbb{I}_{N^{(k)}}$ and $k\in\mathbb{I}_{K}$.

In terms of the accuracy of the solution, the  above two problems can be considered as a practically balanced approach: DAIR is more reliable than direct collocation because it minimizes the MIRNS error for the dynamic equations. DAIR is easier to implement and solve than the PBF method, because DAIR avoids the need to introduce a sequence of weights for penalty and barrier terms and choosing an appropriate, tailored NLP solver.  The NLPs in DAIR will also benefit from tailored solvers designed for efficiency, but the  NLPs can be successfuly solved directly using most off-the-shelf solvers. 

Note that inequality constraints are chosen to be enforced at polynomial data points only (similar to direct collocation), and existing constraint tightening techniques (e.g.\ in \cite{Paiva8392794}) may be applied when necessary. This is an efficient choice for numerical computations, but is without loss of generality; the DAIR framework allows discretized inequality constraints to be enforced anywhere along the trajectory. 

\subsection{Implementation strategies}

\subsubsection{Standalone direct transcription method}
Based on the elementary formulations, the DAIR framework can be implemented as a standalone method for solving the DOP numerically on a given discretization mesh, with one example illustrated in Figure~\ref{fig:DAIRFlowChart}. 
\begin{figure}
\centering
\scalebox{0.75}{\begin{tikzpicture}[align=center, scale=1, node distance = 2.2cm and 1cm]
    \node [block] (meshDesign) {Design/select the discretization mesh, determine the required accuracy level in the form of MIRS errors};
    \node [blockc1, below of=meshDesign] (minResSolve) {Solve the DAIR residual minimization problem with early termination};
    \node [decision1, below of=minResSolve, aspect=2] (errorCheck) {MIRS error for each dynamic equation within requirement?};
    \node [blockc3, below of=errorCheck] (selectIR_R) {Select MIRS error upper bounds $\varrho$ based on pre-set requirement};
    \node [blockl4, left of=errorCheck] (selectIR_A) {Select MIRS error upper bounds $\varrho$ based on achievable error levels};
    \node [blockc2, below of=selectIR_R] (minCostSolve) {Solve the DAIR cost minimization problem};
    \node [blockr1, right of=minCostSolve] (stop) {Stop};
    
    \path [line] (meshDesign) -- (minResSolve);
    \path [line] (minResSolve) -- (errorCheck);
	\path [line] (errorCheck) -- node [color=black,above] {no}(selectIR_A);
    \path [line] (errorCheck) -- node [color=black,right] {yes}(selectIR_R);
    \path [line] (selectIR_R) -- (minCostSolve);
    \path [line] (minCostSolve) -- (stop);
    \path [line] (selectIR_A) |- (minCostSolve);

\end{tikzpicture}}
\caption{Overview of DAIR scheme as a standalone method for solving DOP numerically on a given discretization mesh}
\label{fig:DAIRFlowChart}
\end{figure}
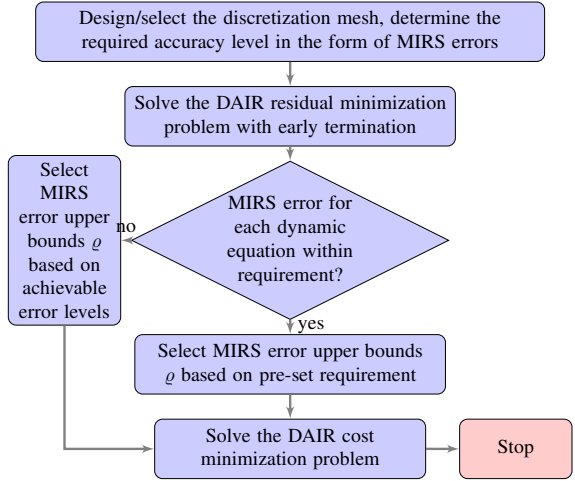

The first step is to select a discretization method, design the mesh and (optionally) determine the weighting parameter for the residual norm computation. Also, the required accuracy level for each dynamic equation needs to be specified in the form of a MIRS error. The idea is to first solve the DAIR residual minimization problem to determine the MIRS error upper bound for the DAIR cost minimization problem formulation.

For the DAIR residual minimization, a set of criteria can be specified in the NLP solver to terminate early once all MIRS errors are within requirement and all other constraints satisfied. This indicates the existence of solutions for this discretization mesh that would fulfill all accuracy requirements, and the DAIR cost minimization problem can be solved subsequently with $\varrho$ configured accordingly. 

On the other hand, if the required MIRS errors are not achievable for this mesh design, early termination will not be triggered and the DAIR residual minimization problem will be fully solved. From the solution, one can extract the smallest MIRS error achievable for any corresponding dynamic equation for which the original requirement cannot be met, and implement this achievable MIRS error or a relaxed version of it as corresponding constraint bounds, to ensure the existence of feasible solutions for the DAIR cost minimization problem. In other words, the DAIR cost minimization problem is guaranteed to have at least one feasible solution, namely the solution at which the DAIR residual minimization problem terminates.

\subsubsection{Solution representation method}
In our early work~\cite{NieSolutionRep}, we proposed an optimization formulation for representing continuous DOP trajectories with higher accuracy from discretized direct collocation NLP solutions. The DAIR residual minimization problem would be a more suitable candidate for the purpose of solution representation, with the following benefits:
\begin{itemize}
    \item there is a flexible trade-off between the level of accuracy for different dynamic equations.
    \item DAIR allows an upper limit for both the objective and MIRS errors for individual dynamics to be set. This guarantees that the obtained trajectory will be no worse than the collocation solution in terms of optimality and accuracy. 
\end{itemize}
All results shown in Section \ref{sec:ExampleProblem} use the DAIR formulation. 

\subsubsection{Other implementation potentials}
The flexibility of the DAIR scheme could enable the formulation of various implementation procedures that are based on it, for a wide range of applications. For example, the scheme can be designed for efficient and accurate solution of dynamic optimization problems, both on-line and off-line, when used together with a suitable mesh refinement/adaptation scheme. This is especially beneficial for on-line NMPC, where the solution accuracy cannot be ensured with a single mesh that has been designed off-line.

\subsection{Order of convergence for the dynamic equations}
 Due to the fact that many DOPs do not have a unique solution, and the multi-objective nature of solving DOPs numerically with discretization methods, the order of convergence for the solution of DOPs using IRM is a sophisticated topic and beyond the focus of this paper. Nevertheless, we would like to note that 
the order of convergence, measured based on the order at which the error reduces with decreasing interval sizes, has been observed in practice to be similar to the order of convergence for solving ODEs using the least squares method~\cite{ascher1978}.  This order of convergence is the same as other DOP transcription methods, such as direct collocation, because the error is eventually dominated by the degree of the finite element polynomial as the interval size tends to zero~\cite{hairer2010solving}. This behavior is also demonstrated in Example~\ref{subsec: ExampleCarPole} with Figure~\ref{fig:CartPole_Sol_Comp_HS8_ResError}.

\section{Example Problems}
\label{sec:ExampleProblem}
To demonstrate the advantages of the DAIR method over direct collocation, three example problems are presented to focus on different aspects. All problems are transcribed using the toolbox \texttt{ICLOCS2} \cite{ICLOCS2}, and solved with interior point NLP solver \texttt{IPOPT} \cite{wachter2006implementation} to a relative convergence tolerance (\texttt{tol}) of $10^{-9}$.  The current version of \texttt{ICLOCS2} has an experimental implementation of the DAIR transcription method for the purpose of proof-of-concept.  

\subsection{Goddard rocket}
As the first example, we will demonstrate different implementations of the Goddard rocket problem \cite[Ex.\ 4.9]{betts2010practical}. The optimal solution is in the form of \emph{bang-singular-bang} and, on the singular arc, it is known for the solution to be oscillatory when solved directly with a single phase numerical solver. Such fluctuations can be clearly seen in Figure~\ref{fig:GoddardRocket_SolComp}. The conventional way of dealing with singular control problems is to introduce additional conditions once the solution structure is known. Despite yielding an accurate solution, this method, however, would normally require a multi-phase formulation support and analytical derivations of the \emph{singular arc conditions}\cite{betts2010practical}.

\begin{figure}[t]
\begin{center}
\includegraphics[width=0.9\columnwidth]{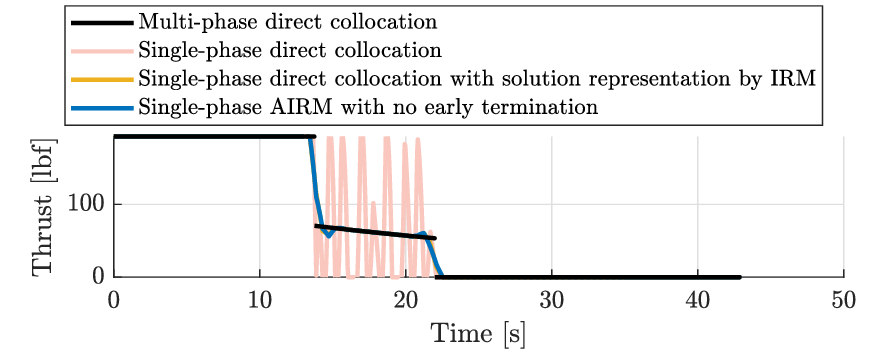}    
\caption{Solutions for the Goddard rocket problem (HS/piecewise cubic parameterization with 100 major nodes)} 
\label{fig:GoddardRocket_SolComp}
\end{center}
\end{figure}

By taking care of the errors in solution trajectories between polynomial data points, both the proposed DAIR scheme and IRM solution representation method derived from the DAIR residual minimization problem are capable of reproducing this multi-phase solution using the original single phase formulation. This is illustrated in Figure~\ref{fig:GoddardRocket_SolComp} with two small oscillations due to approximating the discontinuous optimal input trajectory with a continuous trajectory, whereas the multi-phase setup allows for a discontinuous input. 

\subsection{High-index DAE system}
To demonstrate the advantages of DAIR in dealing with high-index DAE systems, we use the example from \cite[equation system 3]{campbell2016solving} with a DOP derived from a pendulum system containing a index-3 DAE. The authors \cite{campbell2016solving}  found that existing direct collocation solvers,  such as GPOPS-II \cite{patterson2014gpops}, all failed to solve the problem directly in this formulation. For these solvers to yield a solution, problem reformulation and DAE index reduction procedures are necessary. 

We found the same behaviour with the direct collocation implementation in ICLOCS2, shown in Figure~\ref{fig:HighIndexDAE_Comp}. However, the DAIR method is able to solve the problem directly without difficulties, by minimizing the integrated residuals of the DAE system, instead of forcing the residuals to be zero at collocation points. 

\begin{figure}[t]
\begin{center}
\includegraphics[width=0.9\columnwidth]{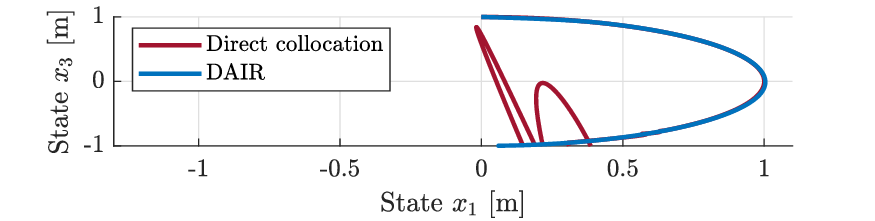}    
\caption{Solutions for high-index DAE system (LGR discretization with 8 mesh intervals, each interval with polynomial degree 5)} 
\label{fig:HighIndexDAE_Comp}
\end{center}
\end{figure}

\subsection{Cart pole swing-up}
\label{subsec: ExampleCarPole}
The cart pole swing-up problem from \cite{kelly2017introduction} requires movement of the cart to a specific location while making sure the pendulum attached to it achieves a vertically-up orientation at $t_f=2$\,s. The problem has  the position of the cart $y_1$ and  the angle of the pendulum arm $\theta_1$; these are state variables together with their time derivatives $\dot{y}_1$ and $\dot{\theta}_1$. The control input is $u \in [-20, 20]$ (force in Newtons). The following terminal conditions are imposed: $y_1(t_f)= 1$\,m, $\dot{y}_1(t_f)=0$\,m/s, $\theta_1(t_f) = \pi$\,rad, and $\dot{\theta}_1(t_f)=0$\,rad/s.

Figure~\ref{fig:CartPole_Sol_Comp_HS8} illustrates the comparison between the direct collocation and DAIR solutions, solved on the same given and coarse discretization mesh. Although the NLP problem transcribed via the direct collocation method successfully terminated with negligibly small tolerances, it becomes apparent that if the corresponding input trajectory is to be applied, the actual evolution of the system states will be very different than what was predicted by the DOP solution. Subsequently, at the final time, the state variables are far away from the terminal conditions. After examining the absolute local error $\eta$ for each mesh interval, the discrepancies in the solution trajectories can be attributed to the large residual errors arising from trajectories between collocation points. 
In constrast, the DAIR method without any early termination criteria yields a solution of very high accuracy considering the very coarse mesh employed. This, however, comes at a cost with a much higher objective value (indication of control effort), further emphasising the multi-objective nature of solving a dynamic optimization problem on a single discretization grid.

\begin{figure}[tb]
\begin{center}
\includegraphics[width=\columnwidth]{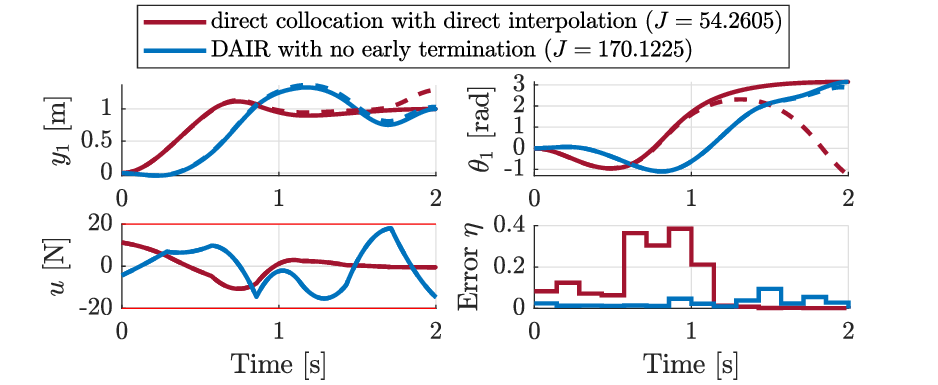}    
\caption{ Solutions to the cart pole swing-up problem (HS/piecewise cubic with 7 mesh intervals, solid lines represent trajectories as solver output, dashed lines represent the resultant trajectory by implementing the input trajectory) } 
\label{fig:CartPole_Sol_Comp_HS8}
\end{center}
\end{figure}

For further exploration, Figure~\ref{fig:CartPole_Sol_Comp_HS8_PF} highlights the trade-off between solution accuracy and optimality. The figure shows a distinctive Pareto front formed by multiple DAIR solutions with different termination conditions depending on the requested error magnitudes. Direct collocation, on the other hand, is only capable of generating a single solution that is  clearly dominated by DAIR solutions. This demonstrates the advantage of the DAIR scheme over direct collocation in terms of flexibility and Pareto optimality. 

\begin{figure}[tb]
\begin{center}
\includegraphics[width=\columnwidth]{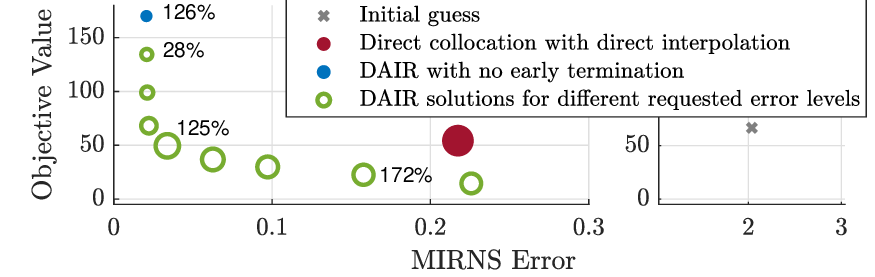}    
\caption{Trade-off between solution accuracy and optimality for the cart pole problem (HS/piecewise cubic with 7 mesh intervals, size of a circle is proportional to the constraint violation at~$t=t_f$. Percentage tags represent the increase in computation time compared to direct collocation solutions with the same error, i.e.\ with a suitably higher number of mesh intervals through mesh refinement.)} 
\label{fig:CartPole_Sol_Comp_HS8_PF}
\end{center}
\end{figure}

It is also important to note that due the system being open-loop unstable, a closed-loop implementation of the DOP solution will be necessary in practice. When implemented on-line as in NMPC, there will be another trade-off process between the DOP solution accuracy and closed-loop performance --- the most accurate open-loop DOP solution may not always be preferred~\cite{BulatParetoFront}. Therefore the flexibility of the DAIR to reliably solve a DOP to a specific accuracy level while ensuring Pareto optimality makes it a highly desirable method.

Although the  MIRNS error is a good measure for solution accuracy, it has limitations due to the weighted norm computation, and because a practical metric for solution accuracy can be problem- and designer-dependent. For this example in particular, what really matters would be the differences between the target state values at terminal time and the ones achieved. Figure~\ref{fig:CartPole_Sol_Comp_HS8_PF}  illustrates this aspect by making the sizes of the circles proportional to $\|y_1(t_f)-1,\theta_1(t_f)-\pi,\dot{y}_1(t_f), \dot{\theta}_1(t_f) \|_2$, a measure of terminal constraint violation. With this metric, the value corresponding to the smallest and largest circles shown in the figure is 1.36 and 10.67, respectively. By observing that the sizes of the circles are generally in correspondence with the values of the MIRNS error, we may conclude that, for this example, the MIRNS error is a suitable metric both theoretically and practically.

Figure \ref{fig:CartPole_Sol_Comp_HS8_ResError} illustrates the trends in the reduction of the MIRNS error as the mesh becomes denser. It can be seen that, although the gradient of the lines limited by the degree of the finite element polynomial, DAIR shows a clear advantage in able to obtain solutions with higher accuracy than direct collocation for the same discretization mesh. 

\begin{figure}[tb]
\begin{center}
\includegraphics[width=\columnwidth]{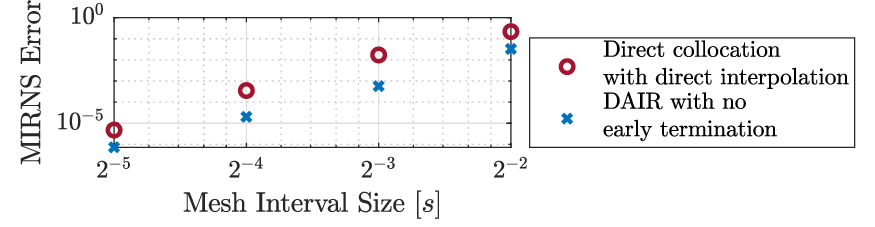}    
\caption{Reduction of MIRNS error as the number of intervals increases (HS/piecewise cubic with equal-spaced intervals).  With early termination, DAIR can in a flexible manner obtain different solutions with errors larger than the blue cross for each mesh design.} 
\label{fig:CartPole_Sol_Comp_HS8_ResError}
\end{center}
\end{figure}

 In Figure~\ref{fig:CartPole_Sol_Comp_HS8_PF}, a comparison of computation time to direct collocation method is also presented. To obtain solutions at the same request error levels, the experimental implementation of DAIR in \texttt{ICLOCS2} saw increases in computation time to different extents depending on the location on the Pareto front. For solutions of relatively high accuracy, most often used in practice, the resulting increase is marginal. It is also important to note that this experimental implementation of DAIR is primarily for proof-of-concept, hence does not yet incorporate code optimizations and detailed exploration of the problem structure and sparsity patterns, which are available in the direct collocation counterpart. With continued developments, it is reasonable to expect integrated residual methods to eventually outperform direct collocation when comparing solutions at the same accuracy level, while maintaining its unique advantages in handling challenging problems.

\section{Conclusions}
\label{sec:Conclusion}
When conventional direct transcription methods, such as direct collocation, are employed to solve nonlinear dynamic optimization problems, assurance in accuracy can only be made a posteriori through error analysis and mesh design iterations. When a given coarse mesh is used, for example in the framework of nonlinear model predictive control, the validity of the solution may become questionable with errors arising inside the intervals between collocation points, despite solving the nonlinear programming problem to negligibly small tolerances. Integrated residual minimization methods fundamentally address this challenge by minimizing the dynamic equation residual error  integrated along the whole trajectory, with the added benefit of being capable of handling difficult problems, such as  those with singular arcs and high-index DAEs. 

Solving DOPs numerically is essentially a multi-objective optimization problem: for a given discretization mesh, one will inevitably face a trade-off between minimizing the objective (for optimality) and minimizing the discretization errors (for accuracy), forming a Pareto front. As demonstrated with the example problems, solutions from direct collocation with a given coarse mesh will be sub-optimal. In contrast, the DAIR scheme is capable of directly obtaining a solution on the Pareto front based on the requested accuracy level. 

Admittedly, the DAIR method is still in an early stage of development. Continued research on these methods will be required in order to realise its full potential and to reach the same level of maturity as direct collocation methods.

\bibliography{main} 
\bibliographystyle{ieeetr}

\end{document}